\newtheorem{thmx}{Theorem}
\numberwithin{equation}{section}
\newtheorem{theorem}{Theorem}[section]
\theoremstyle{definition}
\newtheorem{question}{Question}
\newtheorem{definition}{Definition}
\begin{document}
\title[Bott-Chern cohomology and the Hartogs extension theorem]{Bott-Chern cohomology and the Hartogs extension theorem for pluriharmonic functions}
%\date{May 5, 2022}

\author{Xieping Wang}
\address{School of Mathematical Sciences, University of Science and Technology of China, Hefei 230026, Anhui, People's Republic of China}
\email{xpwang008@ustc.edu.cn}

\thanks{The author was partially supported by NSFC grant 12001513, and NSF of Anhui Province grants 2008085QA18 and 2108085QA04.}

\subjclass[2020]{32D15, 32D20, 31C10, 32L20}
\keywords{Bott-Chern cohomology, Hartogs' extension theorem, pluriharmonic functions}

\dedicatory{To Li and Rui'an}

\begin{abstract}
Let $X$ be a cohomologically $(n-1)$-complete complex manifold of dimension $n\geq 2$. We prove a vanishing result for the Bott-Chern cohomology group of type $(1, 1)$  with compact support in $X$, which combined with the well-known technique of Ehrenpreis implies a Hartogs type extension theorem for pluriharmonic functions on $X$.
\end{abstract}
\maketitle

\section{Introduction}
As is well known, the Hartogs extension theorem for holomorphic functions is one of the striking results in complex analysis that distinguish the higher-dimensional case from the one-dimensional case. In its general form, the theorem states:

\begin{thmx}\label{thm:Hartogs_Holo}
Let $X$ be a complex manifold of dimension $n\geq 2$ which is  $(n-1)$-complete in the sense of Andreotti-Grauert. Suppose $\Omega$ is a domain in $X$ and $K$ is a compact subset of $\Omega$ such that $\Omega\!\setminus\!K$ is connected. Then every holomorphic function on $\Omega\!\setminus\!K$ has a unique holomorphic extension to $\Omega$.
\end{thmx}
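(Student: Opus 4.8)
The plan is to deduce Theorem~\ref{thm:Hartogs_Holo} from a $\bar\partial$-problem with compact support, by the classical technique of Ehrenpreis. Uniqueness will be immediate: $\Omega$ is connected and $\Omega\setminus K$ is a nonempty open subset of it, so any two holomorphic extensions of $f$ coincide on $\Omega$ by the identity theorem.

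For existence, I would first fix a cut-off function $\chi\in C^\infty_c(\Omega)$ with $\chi\equiv 1$ on a neighbourhood of $K$. Given $f\in\mathcal O(\Omega\setminus K)$, the function $f_0:=(1-\chi)f$, extended by $0$ across $K$, is smooth on $\Omega$, and
\[
\alpha:=\bar\partial f_0=-f\,\bar\partial\chi
\]
is a smooth $\bar\partial$-closed $(0,1)$-form whose support is a compact subset of $\Omega\setminus K$; extended by $0$, it becomes a compactly supported smooth $(0,1)$-form on $X$. The heart of the matter is the vanishing of the compactly supported Dolbeault cohomology in bidegree $(0,1)$,
\[
H^{0,1}_{\bar\partial,c}(X)\;\cong\;H^1_c(X,\mathcal O_X)\;=\;0,
\]
which for an $(n-1)$-complete manifold $X$ follows from the Andreotti-Grauert vanishing ($H^q(X,\mathcal F)=0$ for all $q\ge n-1$ and all coherent $\mathcal F$) together with Serre duality, which turns it into $H^p_c(X,\mathcal O_X)=0$ for $p\le 1$. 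Granting this, I can write $\alpha=\bar\partial u$ with $u\in C^\infty_c(X)$, and then $F:=f_0-u$ satisfies $\bar\partial F=\alpha-\alpha=0$ on $\Omega$, i.e.\ $F\in\mathcal O(\Omega)$.

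It then remains to identify $F$ with $f$ on $\Omega\setminus K$. Off $\operatorname{supp}\bar\partial\chi$ the function $u$ is holomorphic, so, having compact support, it vanishes on every connected component of $X\setminus\operatorname{supp}\bar\partial\chi$ that is not relatively compact; since $X$ is noncompact and $\operatorname{supp}\bar\partial\chi\Subset\Omega\setminus K$, a short connectedness argument shows that some such component meets $\Omega\setminus\operatorname{supp}\chi$, where $F=(1-\chi)f-u=f$. As $F-f$ is holomorphic on the connected set $\Omega\setminus K$ and vanishes on a nonempty open subset, it vanishes identically there, so $F$ is the desired extension. I expect the only real obstacle to be the cohomology vanishing in the second step: solving $\bar\partial u=\alpha$ with $u$ merely smooth is routine, but controlling the support of $u$ is exactly where the global convexity of $X$ must enter, and it is this compactly supported vanishing — here for $\mathcal O_X$, and in the body of the paper for Bott-Chern classes of type $(1,1)$ — that carries all the weight.
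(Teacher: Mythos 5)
Your argument is correct and is essentially the approach of the paper itself: Theorem~\ref{thm:Hartogs_Holo} is only quoted there from the literature, but the paper's own Ehrenpreis-style scheme for the pluriharmonic analogue (cut off, solve with compact support via Andreotti--Grauert vanishing plus Serre duality, then identify the extension on a suitable component by unique continuation) is exactly yours, transposed from $\partial\bar{\partial}$ to $\bar{\partial}$. The two points you gloss over are handled the same way in the paper and are not real gaps: Serre duality on a noncompact manifold requires the separatedness hypothesis of Serre's criterion, which holds here because $H^{n-1}(X,\Omega^n_X)=H^{n}(X,\Omega^n_X)=0$; and for the final identification it is cleanest to take a connected component $U$ of $X\!\setminus\!{\rm supp}\,\chi$ meeting $X\!\setminus\!{\rm supp}\,u$ (so $u\equiv 0$ on $U$ and $\chi\equiv 0$ on $U$), whose nonempty boundary lies in ${\rm supp}\,\chi\subset\Omega$, forcing $U\cap(\Omega\!\setminus\!{\rm supp}\,\chi)\neq\emptyset$.
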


The theorem was proved by Andreotti-Hill \cite{Andreotti-Hill} in 1972 and generalized to $(n-1)$-complete normal complex spaces by Merker-Porten \cite{Merker-Porten} in 2009; see \cite{CoRuppen} or \cite{OV-Trans} for a simple alternative proof.  Col\c{t}oiu-Ruppenthal \cite{CoRuppen} also dealt with the more general cohomologically $(n-1)$-complete case. We refer the reader to these works and the references therein for a more detailed account of Hartogs' extension theorem, and to \cite{Vijiitu} for recent developments on related topics. It should also be mentioned that there is a metric analogue of Hartogs' theorem under appropriate geometric conditions; see \cite{Gau-Zimmer} for details.

On the other hand, Chen \cite{Chen} recently proved the following very interesting result.

\begin{thmx}\label{Thm:Hartogs_PH}
Let $\Omega$ be a domain in ${\mathbb C}^n$, $n\geq 2$, and let $K$ be a compact subset of $\Omega$ such that  $\Omega\!\setminus\!K$ is connected. Then every pluriharmonic function on $\Omega\!\setminus\!K$ has a unique pluriharmonic extension to $\Omega$.
\end{thmx}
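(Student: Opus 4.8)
The plan is to run the classical Ehrenpreis argument, reducing the theorem to a vanishing statement for the compactly supported Bott--Chern cohomology of $\Omega$ in bidegree $(1,1)$. Let $u$ be pluriharmonic on $\Omega\setminus K$, fix a cut-off $\chi\in C^\infty_c(\Omega)$ with $\chi\equiv 1$ on a neighbourhood of $K$, and put $v:=(1-\chi)u$ (extended by $0$ across $K$), so $v\in C^\infty(\Omega)$ and $v\equiv u$ off $\operatorname{supp}\chi$. The form $\theta:=i\partial\bar\partial v$ is smooth, real and $d$-closed; it vanishes where $\chi\equiv 1$ (there $v\equiv 0$) and where $\chi\equiv 0$ (there $v\equiv u$ is pluriharmonic), so $\operatorname{supp}\theta$ lies in the compact shell $\operatorname{supp} d\chi\Subset\Omega\setminus K$. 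Granting the vanishing $H^{1,1}_{BC,c}(\Omega)=0$ --- the crux, discussed next --- we obtain $w\in C^\infty_c(\Omega,\mathbb R)$ with $i\partial\bar\partial w=\theta$. Then $U:=v-w$ is pluriharmonic on $\Omega$ and agrees with $u$ on the non-empty open set $\Omega\setminus(\operatorname{supp}\chi\cup\operatorname{supp}w)\subseteq\Omega\setminus K$; since $\Omega\setminus K$ is connected, the identity principle for pluriharmonic (hence real-analytic) functions gives $U\equiv u$ on $\Omega\setminus K$, so $U$ is the desired extension, and the same principle gives uniqueness.

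For the vanishing $H^{1,1}_{BC,c}(\Omega)=0$ one uses that $\Omega$, being an open subset of $\mathbb C^n$, is cohomologically $(n-1)$-complete, and that $n\geq 2$ (the statement fails for $n=1$). A concrete route for the $\theta$ above, which is $\partial\bar\partial$-exact through the \emph{global} potential $v$: write $\theta=\bar\partial\mu$ with $\mu:=-i\partial v$; off $\operatorname{supp}\chi$ one has $\mu=-i\partial u$, which is holomorphic, and by the holomorphic Hartogs theorem (Theorem~\ref{thm:Hartogs_Holo}), applied coefficient-wise, the holomorphic $1$-form $\partial u$ extends from $\Omega\setminus K$ to a holomorphic $1$-form $\widetilde\sigma$ on $\Omega$, with $\partial\widetilde\sigma=0$ on $\Omega$ by the identity principle. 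Then $\beta:=\mu+i\widetilde\sigma$ is supported in $\operatorname{supp}\chi$, hence compactly supported, with $\bar\partial\beta=\theta$ and $\partial\beta=0$, while reality of $\theta$ also gives $\theta=\partial\bar\beta$. Finishing the reduction --- writing $\beta$ (equivalently $\bar\beta$) as $\partial$ (resp.\ $\bar\partial$) of a compactly supported function $h$, so that $\theta=-\partial\bar\partial h$ and, taking imaginary parts, $\theta=i\partial\bar\partial w$ with $w=-\operatorname{Im}h$ --- is precisely the content of the vanishing theorem, which I would establish by identifying $H^{1,1}_{BC,c}(\Omega)$ with $H^1_c(\Omega,\mathcal{PH})$ ($\mathcal{PH}$ the sheaf of pluriharmonic functions) and then invoking Serre and Poincaré duality, via $0\to\mathbb R_\Omega\to\mathcal O_\Omega\to\mathcal{PH}\to0$, together with the cohomological $(n-1)$-completeness of $\Omega$.

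The Ehrenpreis bookkeeping, and the passage to the compactly supported $\beta$ via Theorem~\ref{thm:Hartogs_Holo}, are routine. The genuine difficulty --- the heart of the paper --- is the cohomological input: upgrading a \emph{global} $\partial\bar\partial$-potential to a \emph{compactly supported} one, i.e.\ the vanishing $H^{1,1}_{BC,c}(X)=0$ for cohomologically $(n-1)$-complete $X$. I expect essentially all of the work to lie in making this duality/vanishing argument precise --- pinning down the correct dual group and verifying it vanishes under the completeness hypothesis, with the holomorphic Hartogs theorem controlling the behaviour near the ends of $\Omega$.
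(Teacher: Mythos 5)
Your overall architecture --- the Ehrenpreis cut-off, the compactly supported $d$-closed $(1,1)$-form $\theta$, and the reduction to a compactly supported Bott--Chern vanishing --- is exactly the paper's (the paper proves the general Theorem \ref{thm:Hartogs extension}, of which Theorem \ref{Thm:Hartogs_PH} is the case $X=\mathbb{C}^n$). But there is a genuine gap in \emph{where} you solve the $\partial\bar\partial$-equation: you ask for $H^{1,1}_{\mathrm{BC},\,c}(\Omega)=0$ and justify it by asserting that every open subset of $\mathbb{C}^n$ is cohomologically $(n-1)$-complete. That assertion is false: for $n=2$ it would say $H^{1}(\Omega,\mathcal F)=0$ for all coherent $\mathcal F$, yet $H^1(\mathbb{C}^2\setminus\{0\},\mathcal{O})\neq 0$ (Mayer--Vietoris with the two Stein pieces $\{z_1\neq0\}$, $\{z_2\neq0\}$ produces the classes $z_1^{-a}z_2^{-b}$), and more generally $H^{n-1}(\mathbb{C}^n\setminus\{0\},\mathcal{O})\neq 0$. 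Correspondingly there is no reason for $H^{1,1}_{\mathrm{BC},\,c}(\Omega)$ --- or for the group $H^{0,1}_{\bar\partial,\,c}(\Omega)$ that your ``concrete route'' needs at its last step --- to vanish for an arbitrary domain $\Omega$. The vanishing that is both true and sufficient is for the \emph{ambient} space: $H^{1,1}_{\mathrm{BC},\,c}(\mathbb{C}^n)=0$ (Theorem \ref{thm:BC-vanishing} with $X=\mathbb{C}^n$, proved via Serre duality). Since $\theta$, and likewise your $\beta$, is compactly supported in $\Omega$, one extends it by zero to $\mathbb{C}^n$ and solves there.

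Once that repair is made, a secondary issue surfaces: the potential $w$ is then compactly supported in $\mathbb{C}^n$ but not necessarily in $\Omega$, so your set $\Omega\setminus(\operatorname{supp}\chi\cup\operatorname{supp}w)$ may be empty and the identity-principle propagation cannot start where you start it. The paper's proof deals with exactly this: $w$ is pluriharmonic (hence real-analytic) on $\mathbb{C}^n\setminus\operatorname{supp}\chi$ and vanishes on the nonempty open set $\mathbb{C}^n\setminus\operatorname{supp}w$, hence vanishes identically on a suitable connected component $U$ of $\mathbb{C}^n\setminus\operatorname{supp}\chi$; since $\partial U\subseteq\operatorname{supp}\chi\subset\Omega$, one gets $U\cap\Omega\neq\emptyset$, $F=f$ there, and connectedness of $\Omega\setminus K$ finishes. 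Two further remarks: your construction of the compactly supported $\bar\partial$-primitive $\beta$ out of the holomorphic Hartogs extension of $\partial u$ is a legitimate and rather elegant substitute for the paper's use of $H^{1,1}_{\bar\partial,\,c}=0$, but it does not remove the need for $H^{0,1}_{\bar\partial,\,c}(\mathbb{C}^n)=0$ (again on $\mathbb{C}^n$, not $\Omega$); and your proposed identification of the Bott--Chern group with $H^1_c(\cdot,\mathcal{PH})$ is unnecessary once one works on $\mathbb{C}^n$, where the two-step descent of the paper (Serre duality plus the vanishing of compactly supported holomorphic $2$-forms on a noncompact manifold) does the job directly.
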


Inspired by this result and the work mentioned above, it seems natural to ask the following

\begin{question}\label{Q:Hartogs_extension}
Does Theorem \ref{Thm:Hartogs_PH} still hold when $\mathbb C^n$ is replaced by a Stein (or more generally, cohomologically $(n-1)$-complete) manifold of dimension $n\geq 2$?
\end{question}

Now recall that the Bott-Chern cohomology group of type $(1, 1)$  with compact support in a complex manifold $X$ is defined as
   $$
   H^{1,\, 1}_{{\rm BC},\, c}(X)=\frac{{\rm Ker}\, d\cap \mathcal{D}^{1,\, 1}(X)}{\partial\bar{\partial}\,\mathcal{D}(X)},
   $$
where $\mathcal{D}(X)$ (resp. $\mathcal{D}^{1,\, 1}(X)$) denotes the set of all compactly supported smooth functions (resp. $(1, 1)$-forms) on $X$. An equivalent formulation of Theorem \ref{Thm:Hartogs_PH} is $H^{1,\, 1}_{{\rm BC},\, c}({\mathbb C}^n)=0$ for $n\geq 2$. Chen proved this cohomology-vanishing result by directly solving the $\partial\bar{\partial}$-equation, which depends heavily on an elegant trick of Lelong \cite{Lelong} and the Sobolev inequality  (see \cite[Section 3]{Chen} for details). According to the celebrated work of Mok-Siu-Yau \cite{Mok-Siu-Yau} and Croke \cite{Croke}, these two key ingredients apply only to complete K\"{a}hler manifolds of {\it nonnegative} holomorphic bisectional curvature.\footnote{Despite this, Chen's method is attractive partly because it yields the desired solution in an efficient way, with a precise gradient estimate.} For the question posed earlier, these seem to constitute only a rather restrictive class of complex manifolds, since by virtue of the well-known embedding theorem and the curvature-decreasing property for K\"{a}hler submanifolds, Stein manifolds--the objects of interest here--are usually those of {\it nonpositive} holomorphic bisectional curvature. In such a situation, to answer the previous question we instead use a (qualitative) cohomological argument and prove the following

\begin{theorem}\label{thm:Hartogs extension}
Let $X$ be a cohomologically $(n-1)$-complete complex manifold of dimension $n\geq 2$. Suppose $\Omega$ is a domain in $X$ and $K$ is a compact subset of $\Omega$ such that $\Omega\!\setminus\!K$ is connected. Then every pluriharmonic function on $\Omega\!\setminus\!K$ has a unique pluriharmonic extension to $\Omega$.
\end{theorem}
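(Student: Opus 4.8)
The plan is to derive the theorem from the vanishing of the Bott--Chern cohomology group $H^{1,1}_{\mathrm{BC},c}(X)$, via the classical cutting-off trick of Ehrenpreis; establishing that vanishing is what I regard as the analytic core, and the real difficulty, of the problem. A couple of harmless reductions first: one may assume $X$ is connected, since a domain lies in one component, and then $X$ is automatically non-compact, because cohomological $(n-1)$-completeness forces $H^n(X,\Omega^n_X)=0$, whereas this group is $\mathbb C$ for a compact connected manifold. Finally, decomposing into real and imaginary parts, it suffices to extend a real-valued pluriharmonic $h$.

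So let $h$ be real pluriharmonic on $\Omega\!\setminus\!K$. Pick $\chi\in\mathcal D(\Omega)$ with $0\le\chi\le1$ and $\chi\equiv1$ on a neighbourhood of $K$, and put $L:=\operatorname{supp}\chi\Subset\Omega$. Since $(1-\chi)h$ vanishes near $K$ it extends by zero across $K$ to a smooth function $g$ on $\Omega$ with $g\equiv h$ on $\Omega\!\setminus\!L$; because $\partial\bar\partial h=0$ on $\Omega\!\setminus\!K$, the $(1,1)$-form $\omega:=\partial\bar\partial g$ is supported in $\operatorname{supp}d\chi\Subset\Omega\!\setminus\!K$, so extending it by zero gives a form in $\operatorname{Ker}d\cap\mathcal D^{1,1}(X)$. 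Granting $H^{1,1}_{\mathrm{BC},c}(X)=0$, choose $\phi\in\mathcal D(X)$ with $\partial\bar\partial\phi=\omega$; since $\overline\omega=-\omega$ (as $g$ is real) entails $\partial\bar\partial\overline\phi=\omega$ as well, we may replace $\phi$ by $\tfrac12(\phi+\overline\phi)$ and assume $\phi$ is real. Then $H:=g-\phi$ satisfies $\partial\bar\partial H=0$ on $\Omega$, i.e.\ $H$ is a real pluriharmonic function on all of $\Omega$.

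It remains to check $H\equiv h$ on $\Omega\!\setminus\!K$. Now $\phi$ is pluriharmonic --- hence real-analytic --- on $X\!\setminus\!\operatorname{supp}d\chi$, which contains $X\!\setminus\!L$, while $\operatorname{supp}\phi$ is compact. Choose a connected component $U$ of $X\!\setminus\!L$ that is not relatively compact in $X$ (one exists, $X$ being connected and non-compact). Then $\phi$ vanishes on the nonempty open set $U\!\setminus\!\operatorname{supp}\phi$, hence identically on $U$ by unique continuation. Moreover $\varnothing\neq\partial U\subset\partial L\subset L\subset\Omega$; picking $p\in\partial U$ and a neighbourhood $N\subset\Omega$ of $p$, the open set $N\cap U$ is nonempty and contained in $\Omega\!\setminus\!L\subset\Omega\!\setminus\!K$, where $\chi\equiv0$ and $\phi\equiv0$, so $H=(1-\chi)h-\phi=h$ there. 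Since $H$ and $h$ are both pluriharmonic, thus real-analytic, on the connected set $\Omega\!\setminus\!K$ and agree on the nonempty open set $N\cap U$, they agree throughout $\Omega\!\setminus\!K$. This yields the extension; uniqueness is the same kind of argument, two extensions differing by a pluriharmonic function on the connected $\Omega$ that vanishes on the nonempty open set $\Omega\!\setminus\!K$.

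The heart of the matter is therefore $H^{1,1}_{\mathrm{BC},c}(X)=0$, and this is where cohomological $(n-1)$-completeness must be used. My plan: by the local $\partial\bar\partial$-lemma on polydiscs, the image of $\partial\bar\partial\colon\mathcal E^0_X\to\mathcal E^{1,1}_X$ coincides with the subsheaf $\mathcal Z^{1,1}_X$ of $d$-closed smooth $(1,1)$-forms, so there is an exact sequence of sheaves $0\to\mathcal{PH}_X\to\mathcal E^0_X\xrightarrow{\partial\bar\partial}\mathcal Z^{1,1}_X\to0$, with $\mathcal{PH}_X=\ker\partial\bar\partial$ the sheaf of complex-valued pluriharmonic functions; as $\mathcal E^0_X$ is fine, the long exact sequence for compactly supported cohomology gives $H^{1,1}_{\mathrm{BC},c}(X)\cong H^1_c(X,\mathcal{PH}_X)$. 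Feeding in the exact sequence $0\to\mathbb C_X\to\mathcal O_X\oplus\overline{\mathcal O}_X\to\mathcal{PH}_X\to0$ (a germ of a complex pluriharmonic function is holomorphic $+$ antiholomorphic, unique up to a constant) yields
$$H^1_c(X,\mathcal O_X\oplus\overline{\mathcal O}_X)\to H^1_c(X,\mathcal{PH}_X)\to H^2_c(X,\mathbb C_X)\to H^2_c(X,\mathcal O_X\oplus\overline{\mathcal O}_X).$$
By Serre duality $H^1_c(X,\mathcal O_X)\cong H^{n-1}(X,\Omega^n_X)^{*}=0$ (the coherent sheaf $\Omega^n_X$ in degree $n-1\ge n-1$), and likewise for $\overline{\mathcal O}_X$, so $H^1_c(X,\mathcal{PH}_X)$ is the kernel of the last arrow, and it suffices to prove that $H^2_c(X,\mathbb C_X)\to H^2_c(X,\mathcal O_X)$ is injective. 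Dualizing through Serre and Poincar\'e duality, this is the surjectivity of the natural map $H^{n-2}(X,\Omega^n_X)\to H^{2n-2}(X,\mathbb C)$, which I would obtain from the holomorphic de Rham resolution $\mathbb C_X\simeq\bigl(\mathcal O_X\to\Omega^1_X\to\cdots\to\Omega^n_X\bigr)$ together with the Fr\"olicher-type spectral sequence $E_1^{p,q}=H^q(X,\Omega^p_X)\Rightarrow H^{p+q}(X,\mathbb C)$: the hypothesis $H^q(X,\Omega^p_X)=0$ for $q\ge n-1$ forces every $E_\infty$-term on the line $p+q=2n-2$ to vanish except $E_\infty^{n,n-2}$, which is a quotient of $E_1^{n,n-2}=H^{n-2}(X,\Omega^n_X)$; hence $H^{2n-2}(X,\mathbb C)=E_\infty^{n,n-2}$ is precisely the image of $H^{n-2}(X,\Omega^n_X)$. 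The point I expect to require genuine care in this chain is pinning down Serre duality --- and its compatibility with the sheaf inclusions $\mathbb C_X\hookrightarrow\mathcal O_X$ and $\Omega^n_X\hookrightarrow\Omega^\bullet_X$ --- on a non-compact, cohomologically $(n-1)$-complete manifold, where the relevant cohomology groups may fail to be finite-dimensional.
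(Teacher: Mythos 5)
Your reduction of the theorem to $H^{1,1}_{\mathrm{BC},c}(X)=0$ is essentially the paper's: the same Ehrenpreis cut-off, the same correction by a compactly supported potential, and the same unique-continuation argument on a component of $X\!\setminus\!\operatorname{supp}\chi$ (the paper simply picks a component meeting $X\!\setminus\!\operatorname{supp}u$, which exists by noncompactness, instead of a non-relatively-compact one; both work). Where you genuinely diverge is in proving the vanishing itself. The paper stays entirely on the compactly supported side and only ever applies Serre duality to groups $H^{q}(X,\Omega^{p}_X)$ with $p,q\in\{n-1,n\}$, which \emph{vanish} by hypothesis, so the separatedness conditions in Serre's Th\'eor\`eme 2 are automatic; this yields $H^{0,1}_{\bar\partial,c}(X)=H^{1,1}_{\bar\partial,c}(X)=0$ and the noncompactness of $X$, after which the argument is two lines: solve $\bar\partial\alpha^{1,0}=\alpha^{1,1}$ with compact support, note that $\partial\alpha^{1,0}$ is a compactly supported holomorphic $2$-form and hence zero, then solve $\partial\alpha=-\alpha^{1,0}$ and conclude $\partial\bar\partial\alpha=\alpha^{1,1}$. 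Your route through $H^{1,1}_{\mathrm{BC},c}(X)\cong H^1_c(X,\mathcal{PH}_X)$, the sequence $0\to\mathbb C_X\to\mathcal O_X\oplus\overline{\mathcal O}_X\to\mathcal{PH}_X\to0$ and the Hodge--de Rham spectral sequence is conceptually attractive (it isolates exactly which $E_1$-terms the hypothesis kills), and the spectral-sequence computation is correct, but it routes you through the one place where duality is delicate.

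That place is a genuine gap as written: to turn surjectivity of $H^{n-2}(X,\Omega^n_X)\to H^{2n-2}(X,\mathbb C)$ into injectivity of $H^2_c(X,\mathbb C_X)\to H^2_c(X,\mathcal O_X)$ you invoke Serre duality for the pair $H^2_c(X,\mathcal O_X)$, $H^{n-2}(X,\Omega^n_X)$, and Serre's theorem requires $H^{n-2}(X,\Omega^n_X)$ and $H^{n-1}(X,\Omega^n_X)$ to be separated; for $n\geq 3$ the group $H^{n-2}(X,\Omega^n_X)$ lies below the degree range controlled by cohomological $(n-1)$-completeness, so its separatedness is not available. The step can be repaired without Serre duality at all: if $\xi\in H^2_c(X,\mathbb C)$ is represented by a $d$-closed $2$-form $\alpha$ whose image in $H^2_c(X,\mathcal O_X)=H^{0,2}_{\bar\partial,c}(X)$ vanishes, write $\alpha^{0,2}=\bar\partial\gamma$ with $\gamma\in\mathcal D^{0,1}(X)$; for every $\bar\partial$-closed $\beta\in\mathcal E^{n,n-2}(X)$ (automatically $d$-closed) Stokes gives $\int_X\alpha\wedge\beta=\int_X\bar\partial\gamma\wedge\beta=\int_X d(\gamma\wedge\beta)=0$, so the Poincar\'e-duality functional of $\xi$ annihilates the image of $H^{n-2}(X,\Omega^n_X)$, which by your spectral sequence is all of $H^{2n-2}(X,\mathbb C)$; since $H^2_c(X,\mathbb C)\to H^{2n-2}(X,\mathbb C)^{*}$ is injective for any oriented manifold, $\xi=0$. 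With that patch your argument closes, but note how much shorter the paper's $\bar\partial$-argument is precisely because it never needs duality for a possibly non-separated group.
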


By adapting Ehrenpreis' technique for the $\bar{\partial}$-operator (see \cite{Ehrenpreis, HormanderBook}) to the case of the $\partial\bar{\partial}$-operator, we reduce Theorem \ref{thm:Hartogs extension} to the following cohomology-vanishing result.

\begin{theorem}\label{thm:BC-vanishing}
Let $X$ be a cohomologically $(n-1)$-complete complex manifold of dimension $n\geq 2$. Then
   $$
   H^{1,\, 1}_{{\rm BC},\, c}(X)=0.
   $$
\end{theorem}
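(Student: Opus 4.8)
The plan is to deduce the vanishing of $H^{1,1}_{{\rm BC},\,c}(X)$ from two compact-support solvability statements—one for $\bar\partial$ and one for $\partial$—each of which is a consequence of cohomological $(n-1)$-completeness via Serre duality, in the same spirit as the proof of the holomorphic Hartogs theorem (where one needs only $H^{1}_c(X,\mathcal{O}_X)=0$). Concretely, fix $\omega\in\mathcal{D}^{1,1}(X)$ with $d\omega=0$; we must find $v\in\mathcal{D}(X)$ with $\partial\bar\partial v=\omega$. Since the $(2,1)$- and $(1,2)$-components of $d\omega$ sit in different bidegrees, $d\omega=0$ is equivalent to $\partial\omega=\bar\partial\omega=0$.

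First I would solve $\bar\partial\alpha=\omega$ with $\alpha\in\mathcal{D}^{1,0}(X)$. The compactly supported Dolbeault complex of $\Omega^1_X$ computes $H^{\bullet}_c(X,\Omega^1_X)$, so the obstruction to solving this lies in $H^{1}_c(X,\Omega^1_X)$; and this group vanishes, because by Serre duality it is (topologically) dual to $H^{n-1}\bigl(X,(\Omega^1_X)^{\vee}\otimes\Omega^n_X\bigr)\cong H^{n-1}(X,\Omega^{n-1}_X)$, which is zero by cohomological $(n-1)$-completeness—the separation hypothesis needed for Serre duality being automatic since the target group is zero. (Equivalently, one invokes the general principle that a cohomologically $q$-complete manifold has $H^k_c(X,\mathcal{F})=0$ for every coherent $\mathcal{F}$ and every $k\le n-q$.) Having produced $\alpha$, note that $\bar\partial(\partial\alpha)=-\partial\bar\partial\alpha=-\partial\omega=0$, so $\partial\alpha\in\mathcal{D}^{2,0}(X)$ is a compactly supported holomorphic $2$-form. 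Since a cohomologically $(n-1)$-complete manifold has no compact connected component, the identity principle forces $\partial\alpha=0$, i.e. $\alpha$ is $\partial$-closed.

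Next I would solve $\partial v=\alpha$ with $v\in\mathcal{D}(X)$. The obstruction now lies in $H^{1,0}_{\partial,\,c}(X)$, which under complex conjugation is identified with $\overline{H^{1}_c(X,\mathcal{O}_X)}$; and $H^{1}_c(X,\mathcal{O}_X)$ is Serre-dual to $H^{n-1}(X,\Omega^n_X)=0$. So $\alpha=\partial v$ for some $v\in\mathcal{D}(X)$, whence $\omega=\bar\partial\alpha=\bar\partial\partial v=-\partial\bar\partial v=\partial\bar\partial(-v)$; thus $[\omega]=0$ in $H^{1,1}_{{\rm BC},\,c}(X)$, and since $\omega$ was arbitrary the theorem follows. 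The bookkeeping here is entirely formal, so the only step that is not purely formal—and the sole place the hypothesis enters—is the twofold vanishing $H^{1}_c(X,\mathcal{O}_X)=H^{1}_c(X,\Omega^1_X)=0$ obtained from cohomological $(n-1)$-completeness. I expect the remaining points to be routine but worth recording precisely: that the smooth compactly supported Dolbeault and $\partial$-complexes genuinely compute the sheaf cohomology with compact support (softness of the sheaves $\mathcal{A}^{p,q}$), that Serre duality applies in this noncompact setting with the separation hypothesis vacuously satisfied, and that $X$ has no compact component so that compactly supported holomorphic forms vanish identically.
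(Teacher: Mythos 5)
Your proposal is correct and takes essentially the same route as the paper's own proof: both solve $\bar\partial\alpha=\omega$ using $H^{1}_c(X,\Omega^1_X)=0$, kill $\partial\alpha$ as a compactly supported holomorphic $2$-form on a manifold with no compact component, and then solve for a compactly supported potential using $H^{1}_c(X,\mathcal{O}_X)=0$, with both vanishings obtained from cohomological $(n-1)$-completeness via Serre duality (the separation hypothesis being vacuous). The only cosmetic differences are that the paper phrases the second step through $H^{0,1}_{\bar\partial,c}(X)=0$ rather than its conjugate $\partial$-cohomology, and derives noncompactness from $H^0_c(X,\mathcal{O}_X)=0$.
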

This follows easily from the Serre duality \cite{Serre}, as we will show in Sect. \ref{sect:result-proof}.
If $X$ is Stein, the result is previously known; see \cite[Teorema 2.4]{Bigolin}.

We now conclude the introduction with some remarks on Theorem \ref{thm:Hartogs extension}. Using the unique continuation property for holomorphic forms, one can easily show that Theorem \ref{thm:Hartogs extension} implies the Hartogs extension theorem for holomorphic functions. And as essentially pointed out in \cite{Merker-Porten}, the latter does not hold in general if the cohomological $(n-1)$-completeness assumption of $X$ is weakened to (cohomological) $n$-completeness, which is equivalent to requiring that $X$ have no compact connected components according to  the Serre duality and a result of Greene-Wu (see \cite[Chapter IX, Theorem 3.5]{Demaillybook}). These show that the cohomological completeness condition imposed on $X$ in Theorem \ref{thm:Hartogs extension} is sharp.

\medskip
\noindent {\bf Acknowledgements.}
The author is grateful to the anonymous referees for  their careful reading of the paper and helpful comments.

\section{Proof of the results}\label{sect:result-proof}
We begin by recalling the notion of cohomological $q$-completeness, where $q$ is a positive integer.

\begin{definition}
A complex manifold $X$ is called {\it cohomologically $q$-complete} if its $k$-th cohomology group with coefficients in $\mathcal{F}$, $H^k(X,\, \mathcal{F})$, vanishes for every coherent analytic sheaf $\mathcal{F}$ over $X$ and integer $k\geq q$.
\end{definition}

Recall also that an $n$-dimensional complex manifold is called {\it $q$-complete} in the sense of Andreotti-Grauert \cite{Andreotti-Grauert} if it possesses a smooth exhaustion function whose Levi form has at least $n-q+1$ positive eigenvalues at every point. According to Grauert's solution of the Levi problem (see, e.g., \cite{HormanderBook}), $1$-complete complex manifolds are precisely Stein manifolds. In general, by an important result of Andreotti-Grauert \cite{Andreotti-Grauert} we know that $q$-complete complex manifolds are necessarily cohomologically $q$-complete, but the converse remains unclear except for the cases $q=1$ and $q\geq {\rm dim}\,X$.

\smallskip
We are now ready to prove the results of this paper, starting with Theorem \ref{thm:BC-vanishing}.

\begin{proof}[Proof of Theorem $\ref{thm:BC-vanishing}$]
Without loss of generality, we may assume that $X$ is connected. Let $\Omega^p_X$ denote the sheaf of germs of holomorphic $p$-forms on $X$, $p=0,\ldots,n$. It is well-known that $\Omega^p_X$ is coherent. Since $n\geq 2$, the cohomological $(n-1)$-completeness of $X$ implies
 $$
 H^q(X,\, \Omega^p_X)=0 \quad {\rm for}\ \,\, p,\, q=n-1,\,n.
 $$
Then by Serre's criterion \cite[Proposition 6]{Serre}, we can invoke the Serre duality \cite[Th\'{e}or\`{e}me 2]{Serre} to get
 $$
 H^1_c(X,\, \Omega^p_X)=0\quad {\rm for}\ \,\, p=0,\,1.
 $$
Equivalently, we have the corresponding vanishing result for the Dolbeault cohomology groups of $X$ with compact support:
 $$
 H^{0,\, 1}_{\bar{\partial},\, c}(X)=0
 \quad {\rm and}\quad H^{1,\, 1}_{\bar{\partial},\, c}(X)=0,
 $$
in view of the Dolbeault isomorphism theorem (with compact support). Furthermore, using $H^n(X,\, \Omega^n_X)=0$ and the Serre duality again  we obtain
 $$
 H^0_c(X,\, \Omega^0_X)=0.
 $$
This means that $X$ is noncompact.

We now show that what we have obtained in turn implies
   $$
   H^{1,\, 1}_{{\rm BC},\, c}(X)=0.
   $$
To see this, let $\mathcal{D}^{p,\, q}(X)$ denote the set of all compactly supported  smooth  $(p, q)$-forms on $X$. Then for every $d$-closed form $\alpha^{1,\, 1}\in\mathcal{D}^{1,\, 1}(X)$,  the vanishing of $H^{1,\, 1}_{\bar{\partial},\, c}(X)$ implies that there exists a form
$\alpha^{1,\, 0}\in \mathcal{D}^{1,\, 0}(X)$ such that $\bar{\partial}\alpha^{1,\, 0}=\alpha^{1,\, 1}$.
Now
  $$
  \bar{\partial}\partial\alpha^{1,\, 0}=-\partial\bar{\partial}\alpha^{1,\, 0}=-\partial\alpha^{1,\, 1}=0.
  $$
It follows that $\partial\alpha^{1,\, 0}$ is a compactly supported holomorphic $2$-form  on the noncompact manifold $X$, hence vanishes identically. Since $H^{0,\, 1}_{\bar{\partial},\, c}(X)=0$, then there exists a function $\alpha\in \mathcal{D}(X)$ such that $\partial\alpha=-\alpha^{1,\, 0}$. Consequently
  $$
  \partial\bar{\partial}\alpha=\bar{\partial}\alpha^{1,\, 0}=\alpha^{1,\, 1},
  $$
and we are done.
\end{proof}

\begin{proof}[Proof of Theorem $\ref{thm:Hartogs extension}$]
Choose a cut-off function $\chi\in \mathcal{D}(\Omega)$ such that $\chi=1$ on a neighborhood of $K$. Given a pluriharmonic function $f$ on $\Omega\!\setminus\!K$, we define
  $$
  v:=\partial\bar{\partial}((1-\chi)f).
  $$
Then $v$ is a smooth $d$-closed $(1, 1)$-form on $X$  satisfying
  $$
  {\rm supp}\,v\subset {\rm supp}\,\chi\subset\Omega.
  $$
By Theorem \ref{thm:BC-vanishing}, there exists a function $u\in\mathcal{D}(X)$ such that $\partial\bar{\partial} u=v$.

Now set
  $$
  F:=(1-\chi)f-u,
  $$
which is pluriharmonic on $\Omega$. We claim that $\left.F\right|_{\Omega\setminus K}=f$. For this we may assume that $X$ itself is connected, since otherwise $\Omega$ would be contained in a single connected component of $X$ and we only need to replace $X$ with this connected component. Note that $u$ is  pluriharmonic (and hence real-analytic) on $X\!\setminus\!{\rm supp}\,\chi$, which intersects $X\!\setminus\!{\rm supp}\,u$ since $X$ is noncompact (as we pointed out in the proof of Theorem \ref{thm:BC-vanishing}). Then by the unique continuation property, $u$ vanishes identically on some connected component, say $U$, of $X\!\setminus\!{\rm supp}\,\chi$. On the other hand, the connectedness of $X$ implies that $U$ has a nonempty boundary in $X$, which is necessarily contained in ${\rm supp}\,\chi\subset\Omega$ (by the fact that $U$ is a connected component of $X\!\setminus\!{\rm supp}\,\chi$). We then conclude that
  $$
  U\cap(\Omega\!\setminus\!{\rm supp}\,\chi)=U\cap\Omega\neq \emptyset
  $$
and
  $$
  \left.F\right|_{U\cap(\Omega\setminus {\rm supp}\,\chi)}
  =\left.f\right|_{U\cap(\Omega\setminus {\rm supp}\,\chi)}.
  $$
Consequently, $\left.F\right|_{\Omega\setminus K}=f$ by the connectedness of $\Omega\!\setminus\!K$.
\end{proof}

\end{document}